\documentclass{amsart}

\RequirePackage{color}

\usepackage{amsmath,amsthm,amssymb,mathrsfs,amscd}
\usepackage{mathtools}
\usepackage[all]{xy}
\usepackage[english]{babel}
\usepackage[utf8]{inputenc}
\usepackage[colorlinks=false, linktocpage=true]{hyperref}
\usepackage{xcolor}

\newtheoremstyle{thmstyle}{}{}{\itshape}{}{\bfseries}{ }{5pt}{}
\newtheoremstyle{defstyle}{}{}{}{}{\bfseries}{ }{5pt}{}
\newtheoremstyle{remstyle}{}{}{}{}{\bfseries}{ }{5pt}{}

\theoremstyle{thmstyle}
\newtheorem{thm}{Theorem}[section]

\newtheorem{lem}[thm]{Lemma}

\newtheorem{prop}[thm]{Proposition}

\newtheorem{cor}[thm]{Corollary}

\theoremstyle{defstyle}

\newtheorem{example}[thm]{Example}
\newtheorem{question}[thm]{Question}

\theoremstyle{remstyle}

\newtheorem{remark}[thm]{Remark}

\newcommand{\Hom}{\operatorname{Hom}}
\newcommand{\Fac}{\operatorname{Fac}}
\newcommand{\Ext}{\operatorname{Ext}}

\DeclareMathOperator{\End}{End}

\DeclareMathOperator*{\modu}{mod}
\DeclareMathOperator*{\proj}{proj}
\DeclareMathOperator*{\add}{add}

\DeclareMathOperator*{\Filt}{Filt}

\DeclareMathOperator*{\ind}{ind}
\DeclareMathOperator*{\brick}{brick}
\DeclareMathOperator*{\simp}{simp}

\DeclareMathOperator*{\wide}{wide}
\DeclareMathOperator*{\sbrick}{sbrick}

\DeclareMathOperator*{\pd}{pd}
\DeclareMathOperator*{\GL}{GL}
\DeclareMathOperator*{\rep}{rep}
\DeclareMathOperator*{\Irr}{Irr}

\newcommand{\T}{\mathcal{T}}
\newcommand{\W}{\mathcal{W}}

\newcommand{\Du}{\mathrm{D}}
\newcommand{\trig}{\tau\text{-rigid}}

\begin{document}

\title{Brick infinite algebras admit infinitely many non-$\tau$-rigid bricks}

\author[Kaveh Mousavand, Charles Paquette]{Kaveh Mousavand, Charles Paquette}

\address{Kaveh Mousavand: Representation Theory and Algebraic Combinatorics Unit, Okinawa Institute of Science and Technology (OIST), Japan}
\email{mousavand.kaveh@gmail.com}
\address{Charles Paquette: Department of Mathematics and Computer Science, Royal Military College of Canada, Kingston ON, Canada}
\email{charles.paquette.math@gmail.com}

\subjclass[2020]{16G20, 16G60, 16D80, 16E30}
\keywords{brick, $\tau$-rigid module, semibrick, brick-finiteness, $\tau$-tilting fan}

\begin{abstract}
For finite dimensional algebras over algebraically closed fields, we settle a question previously known only for certain families of algebras. More specifically, motivated by some foundational interactions between bricks and $\tau$-rigid modules, we show that a given algebra is brick infinite if and only if it admits infinitely many bricks which are not $\tau$-rigid. This proves the $\tau$-analogue of an open conjecture asserting that if (almost) all bricks over an algebra $A$ are rigid, then $A$ should be brick-finite. In retrospect, we strengthen some recent contributions to the study of a series of challenging open problems related to the $2$nd brick-Brauer-Thrall conjecture. 
Moreover, motivated by some of our arguments, we pose the question whether there exists any algebra that admits an infinite semibrick consisting of Ext-orthogonal rigid bricks. In connection with this, we present an algebra $A$ of rank $n$ that admits a semibrick of cardinality strictly greater than $n$ consisting of Ext-orthogonal rigid bricks.
\end{abstract}

\maketitle

\section{Introduction}

Throughout, $k$ denotes an algebraically closed field and $A$ a finite dimensional, basic and connected, associative unital $k$-algebra of rank $n$. By Gabriel's theorem, we may henceforth assume $A \cong kQ/I$, where $Q$ is a finite quiver and $I \subseteq kQ$ is an admissible ideal. That being the case, we also identify modules in $\modu A$ (the category of finite dimensional left $A$-modules) with
representations of the bound quiver $(Q,I)$. As usual, $\ind(A)$ denotes the set of (isomorphism classes of)
indecomposables in $\modu A$, and $A$ is called \emph{representation-finite} if $\ind(A)$ is finite.

\medskip

A module $X \in \modu A$ is a \emph{brick} (or a \emph{Schur representation}) if $\End_A(X) \cong k$, and $\brick(A)$ denotes the set of isomorphism classes of bricks. Then, the algebra $A$ is \emph{brick-finite} if $\brick(A)$ is a finite set. Bricks are known to be central objects in the modern representation theory of algebras. For instance, they govern wide subcategories and torsion pairs, label the Hasse quiver of torsion classes, arise as the stable objects of King's stability conditions, and, as shown in the more recent studies, they also control the $\tau$-tilting finiteness of
$A$; for more details on bricks and their recent developments, see \cite{MP4} and the long list of references therein.

\medskip

To state our main problem and results, recall that $M \in \modu A$ is said to be \emph{$\tau$-rigid} provided $\Hom_A(M, \tau M)=0$, where $\tau$ is the Auslander--Reiten translate. By a fundamental result of \cite{DIJ}, $A$ is brick-finite if and only if it is $\tau$-tilting finite, that is, if and only if it admits only finitely many indecomposable $\tau$-rigid modules. Moreover, the authors gave an explicit injective \emph{brick--$\tau$-rigid correspondence} from the set $i\,\trig(A)$ of indecomposable $\tau$-rigid modules into $\brick(A)$. It is further known that if $A$ is brick-finite, the aforementioned brick--$\tau$-rigid correspondence is in fact a bijection; notably, the converse remains unknown in general!

\medskip

Among many similar studies conducted in recent years, we have investigated the relationship between bricks and $\tau$-rigid modules; see \cite{MP1,MP2,MP3} and the references therein.
In particular, in \cite[Theorem~4.3]{MP1}, we showed that every brick is $\tau$-rigid if and only if $A$ is \emph{locally representation-directed}, where the latter class of algebras was introduced by Dr\"axler \cite{Dr} and gives a generalization of the representation-directed algebras from a different point of view. 
It is notable that any locally representation-directed algebra is necessarily representation-finite. 
Thus, it is natural to relax the hypothesis from \emph{all} bricks to \emph{almost all} bricks. The following is previously recorded in \cite{MP2} (see also \cite[Proposition~8.7]{MP4}), where it is already affirmatively established for the class of $E$-tame algebras (therefore, all tame algebras).

\begin{question}\label{q:main}
If all but finitely many bricks of $A$ are $\tau$-rigid, is $A$ necessarily brick-finite?
\end{question}

This problem can be seen as a $\tau$-rigid analogue of the (still open) question of whether the rigidity of almost all bricks forces brick-finiteness (see Remark \ref{Rem: on rigid-brick conj}). 
Furthermore, Question \ref{q:main} is closely tied to an important problem, nowadays called the ``second brick-Brauer-Thrall conjecture" on distribution of bricks, as well as to a question of Demonet \cite{De}, concerned with the completeness of the $g$-vector fan (also called $\tau$-tilting fan) in $K_0(\proj A)_\mathbb{Q}$. For more details on these challenging problems, see \cite{MP2,MP4} and references therein. The purpose of this note is to settle Question~\ref{q:main}, \emph{affirmatively and in full generality}, without any tameness hypothesis. More specifically, our main result is the following.

\begin{thm}\label{thm:main-intro}
Suppose that $A$ admits infinitely many $\tau$-rigid bricks. Then $A$ admits infinitely many
bricks that are not $\tau$-rigid. 
\end{thm}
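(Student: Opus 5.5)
Assume $A$ has infinitely many $\tau$-rigid bricks. The aim is to manufacture infinitely many non-$\tau$-rigid bricks, and the plan is to reduce to the minimal brick-infinite situation.

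\textbf{Step 1: reduce to a minimal brick-infinite quotient.} Since $A$ has infinitely many $\tau$-rigid bricks, it is brick-infinite, and hence $\tau$-tilting infinite by \cite{DIJ}. Because $A$ is finite dimensional, it admits a quotient $B=A/J$ that is \emph{minimal brick-infinite}: $B$ is brick-infinite, but every proper quotient of $B$ is brick-finite. Bricks of $B$ are bricks of $A$. It therefore suffices to produce infinitely many $B$-bricks $X$ with $\Hom_A(X,\tau_A X)\neq 0$.

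To pass from $B$ to $A$, I would use the standard fact that $\tau_B X$ is a submodule of $\tau_A X$, so $\Hom_B(X,\tau_BX)\neq0$ implies $\Hom_A(X,\tau_AX)\ne0$. By the Auslander--Reiten formula, $\tau$-rigidity of $X$ over $B$ is equivalent to $\Ext^1_B(X,\Fac X)=0$. This condition is inherited along quotients in the direction we need, since $\Ext^1_B$ embeds into $\Ext^1_A$ on $B$-modules.

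\textbf{Step 2: minimal brick-infinite algebras have infinitely many bricks that are not $\tau$-rigid.} I expect to use the known structure from earlier work (\cite{MP4}) for minimal brick-infinite algebras. In that setting all but finitely many bricks lie in a one-parameter family, or more generally infinitely many bricks of a fixed dimension vector $d$ arise, via a generically brick irreducible component.

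I would argue by contradiction. Suppose almost all bricks of $B$ were $\tau$-rigid. Take an irreducible component $\mathcal Z\subseteq \rep(B,d)$ containing infinitely many pairwise non-isomorphic bricks; such a component exists by a pigeonhole argument on dimension vectors, combined with the fact that $B$ has only finitely many bricks of each bounded size after the minimality reduction. Over $k$ algebraically closed, bricks form an open subset of $\mathcal Z$, so infinitely many non-isomorphic bricks in one component means the generic brick moves in a family of dimension $\ge1$.

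A $\tau$-rigid module $M$ has an open $\GL_d$-orbit in $\rep(B,d)$, because $\Ext^1(M,M)=0$. Hence a component contains at most one $\tau$-rigid isomorphism class up to its dense orbit, and a component with a dense orbit cannot carry a positive-dimensional family of non-isomorphic bricks. So in $\mathcal Z$ at most one brick is $\tau$-rigid, and the rest are non-$\tau$-rigid. Since $\mathcal Z$ contains infinitely many non-isomorphic bricks, it contains infinitely many non-$\tau$-rigid bricks. In fact, this orbit argument only needs some component with infinitely many bricks, and for that the minimal reduction is not even necessary.

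\textbf{The main obstacle.} Existence of such a component is the crux. A brick-infinite algebra might a priori have bricks of unbounded dimension, with only finitely many in each dimension vector. In that case every component with a brick would have a dense brick orbit, and the bricks could even all be $\tau$-rigid. This is the second brick-Brauer-Thrall issue.

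To handle it, I would use Step 1. I would try to show that a minimal brick-infinite algebra has a dimension vector with infinitely many bricks, that is, a non-rigid generic brick. If that is unavailable, I would give a direct argument. Take the infinitely many $\tau$-rigid bricks $X_i$ of $B$, together with the associated $\tau$-tilting pairs, and build bricks in the wide subcategories $\Filt$ of semibricks. Two Ext-orthogonal bricks $S,T$ with $\Ext^1(S,T)$ of dimension $\ge2$ yield a Kronecker-type family of bricks, which gives infinitely many bricks in a single dimension vector. Producing such a pair from infinitely many $\tau$-rigid bricks, for example from a non-$\tau$-rigid limit in the $g$-vector fan or from the failure of the semibricks to remain finite, is where I expect the hardest work to be.
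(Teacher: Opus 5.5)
\textbf{There is a genuine gap.} Your Step~1 (passing to a minimal brick-infinite quotient $B=A/J$, with non-$\tau_B$-rigid bricks staying non-$\tau_A$-rigid because $\tau_B X\subseteq\tau_A X$) matches the paper. Your orbit argument is also correct: by Voigt's lemma a rigid module has an orbit open in $\rep(B,d)$, so each component contains at most one rigid isoclass, and a dimension vector carrying infinitely many bricks yields infinitely many non-$\tau$-rigid ones.

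The problem is that Step~2 rests entirely on the existence of such a dimension vector, and you never establish it. The justification you give defeats itself: if $B$ has only finitely many bricks of each bounded dimension, pigeonhole produces no dimension vector with infinitely many bricks. Getting infinitely many bricks in a single dimension vector for an arbitrary, even minimal, brick-infinite algebra is essentially the second brick-Brauer--Thrall conjecture. That conjecture is open, and the one-parameter-family structure you attribute to \cite{MP4} is not known in general. Your fallback is to find Hom-orthogonal (you wrote Ext-orthogonal) bricks $S,T$ with $\dim\Ext^1(S,T)\ge 2$, but you explicitly leave it undone. So the case you yourself flag as the obstacle, bricks of unbounded dimension with finitely many per dimension vector, is exactly the one left open.

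The paper avoids this by arguing by contradiction and using the assumption ``almost all bricks are $\tau$-rigid'' as a tool, rather than constructing non-$\tau$-rigid bricks directly.
\begin{itemize}
\item Your orbit argument appears only when $B$ is strictly wild. There, infinitely many bricks of one dimension vector come for free from the embedding of $\modu kK_3$.
\item When $B$ is not strictly wild, infinitely many $\tau$-rigid bricks over a minimal brick-infinite algebra give a non-rigid integral $g$-vector, via $\tau$-convergence \cite[Theorem~6.8]{MP1}.
\item Asai's theorem (Proposition~\ref{prop:asai}) turns that $g$-vector into an infinite semibrick.
\item By the contradiction hypothesis and Proposition~\ref{prop:mintau}, almost all members of this semibrick are $\tau$-rigid of projective dimension one.
\item Because $B$ is not strictly wild, each member has nonzero $\Ext^1$ with at most eight others; otherwise a wild star quiver embeds into a wide subcategory. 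This yields an infinite pairwise Ext-orthogonal sub-semibrick.
\item The direct sum of more than ${\rm rk}(B)$ of these members is $\tau$-rigid by (R2), which contradicts (R3).
\end{itemize}
The ingredients you are missing are this $g$-vector--semibrick route and the rank bound. They replace the unavailable ``infinitely many bricks in one dimension vector'' outside the strictly wild case.
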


Equivalently, the above theorem asserts that an algebra cannot have an infinite family of $\tau$-rigid bricks but only finitely many non-$\tau$-rigid bricks. Because every brick-finite algebra trivially has all but finitely many bricks $\tau$-rigid, observe that Theorem~\ref{thm:main-intro} yields an affirmative answer to Question~\ref{q:main}. 
We also state our main result in the more common language of rigidity, which requires no prior knowledge nor explicit use of Auslander-Reiten theory and the AR translate $\tau$. In that regard, for $X$ in $\modu A$, let $\Fac(X)$ denote the set of modules obtained as a quotient of some powers of $X$. 
More explicitly,
$$\Fac(X):=\{M\in \modu A \,|\, X^{\oplus d} \twoheadrightarrow M, \text{ for some } d \in \mathbb{Z}_{>0} \}.$$
Then, define $\Ext^1_A(X,\Fac(X))=0$ if $\Ext^1_A(X,M)=0$, for all $M \in \Fac(X)$.

\begin{cor}\label{cor:main-intro}
The algebra $A$ is brick-finite if and only if all but finitely many of its bricks are $\tau$-rigid.
Consequently, for an algebra $A$, the following are equivalent:
\begin{enumerate}
    \item $A$ is brick-infinite;
    \item $\Hom_A(X,\tau X)\neq 0$, for infinitely many $X \in \brick(A)$;
    \item $\Ext^1_A(X,\Fac(X))\neq 0$, for infinitely many $X \in \brick(A)$.
\end{enumerate}

\end{cor}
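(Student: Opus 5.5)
The plan is to deduce the corollary formally from Theorem~\ref{thm:main-intro}, together with the theorem of \cite{DIJ} and the Auslander--Smal{\o} characterization of $\tau$-rigidity. No new representation-theoretic input should be needed beyond these.

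First I prove the equivalence \emph{brick-finite $\iff$ all but finitely many bricks are $\tau$-rigid}. The forward direction is trivial: if $\brick(A)$ is finite, then certainly all but finitely many bricks are $\tau$-rigid. For the converse, suppose only finitely many bricks fail to be $\tau$-rigid, and suppose for contradiction that $A$ is brick-infinite. Then infinitely many bricks are $\tau$-rigid, since the complement of a finite set in an infinite set is infinite. Theorem~\ref{thm:main-intro} then gives infinitely many non-$\tau$-rigid bricks, which is a contradiction. (Alternatively, one may view Theorem~\ref{thm:main-intro} as the contrapositive of this direction.)

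Next I treat the three listed conditions. The equivalence (1)$\iff$(2) is a restatement of the first part, since $X$ is non-$\tau$-rigid exactly when $\Hom_A(X,\tau X)\neq 0$. If $A$ is brick-infinite, the first part says that infinitely many bricks are not $\tau$-rigid. Conversely, infinitely many bricks satisfying (2) force $\brick(A)$ to be infinite.

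For (2)$\iff$(3), I invoke the classical result of Auslander--Smal{\o}: for $X\in\modu A$, $\Hom_A(X,\tau X)=0$ if and only if $\Ext^1_A(X,\Fac(X))=0$. I recall why this holds, via the Auslander--Reiten formula $\Ext^1_A(X,M)\cong \Du\overline{\Hom}_A(M,\tau X)$. If $\Hom_A(X,\tau X)=0$, then $\Hom_A(M,\tau X)=0$ for every quotient $M$ of $X^{\oplus d}$, so the Ext vanishes. The converse is the nontrivial half. Its standard argument uses that the image of a nonzero map $X\to\tau X$ lies in $\Fac(X)$, and that a nonzero map $M\to\tau X$ with $M\in\Fac(X)$ factoring through an injective can be avoided using $\Fac(X)$-approximations. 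I will simply cite this result rather than reprove it.

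Applying this equivalence brick by brick, the set of bricks with $\Hom_A(X,\tau X)\neq 0$ coincides with the set of bricks with $\Ext^1_A(X,\Fac(X))\neq0$. Hence (2)$\iff$(3). The only substantive step is Theorem~\ref{thm:main-intro} itself; granting it and the Auslander--Smal{\o} criterion, the corollary is bookkeeping.
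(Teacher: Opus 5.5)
Your proposal is correct and follows the paper's route: the first equivalence is derived by contradiction from Theorem~\ref{thm:main-intro} exactly as in Corollary~\ref{cor:main}, and (2)$\iff$(3) follows brick by brick from the Auslander--Smal\o{} criterion \cite[Proposition~5.8]{AS}, as in Remark~\ref{Rem: on rigid-brick conj}. Your sketch of the nontrivial half of that criterion is imprecise; the standard argument is that if the inclusion of $\operatorname{Im} f$ into $\tau X$ factored through an injective, then the injective envelope of $\operatorname{Im} f$ would embed in $\tau X$, hence split off from it, which is impossible because $\tau X$ has no nonzero injective summands. Since you cite the result rather than rely on the sketch, nothing is lost.
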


It is instructive to compare Corollary~\ref{cor:main-intro} with the corresponding statement for \emph{all} bricks. Passing from ``all'' to ``almost all'' bricks, the conclusion necessarily weakens
from a class of representation-finite algebras to brick-finite algebras. We remark that
brick-finite algebras need not be representation-finite. In fact, there exist representation-infinite algebras (of tame or wild type) which satisfy the assumption of the preceding corollary, hence they are brick-finite. Among the abundance of such algebras, preprojective algebras of Dynkin quivers have received a lot of attention; for instance, see \cite{Mi} and the references therein.
\medskip

The initial step of our proof is the reduction to minimal brick-infinite algebras, introduced in \cite{Mo} (also see \cite{Wa}) and further systematically studied in \cite{MP3} through the lens of $\tau$-tilting theory. Then, we develop the argument according to the representation type of algebras under consideration.
More specifically, the proof strategy combines three main ingredients of independent interest, outlined below.
First, thanks to the
$\tau$-convergence property introduced in \cite{MP1}, over a minimal brick-infinite algebra we have that an infinite family of $\tau$-rigid bricks induces an integral $g$-vector lying outside the $\tau$-tilting fan. 
Second, by a recent theorem of Asai \cite{As2}, such a non-rigid integral $g$-vector yields an \emph{infinite} semibrick. Third, when the algebra is not strictly wild, we show that any infinite semibrick contains an infinite sub-semibrick whose members are pairwise extension-orthogonal; over a minimal brick-infinite algebra this yields arbitrarily large $\tau$-rigid modules, which is absurd. The strictly wild case is handled separately and directly; it is known that strictly wild algebras always possess infinitely many bricks of a fixed dimension, and only finitely many bricks of a given dimension can be $\tau$-rigid.

\medskip

The paper is organized as follows. Section~\ref{sec:prelim} collects the necessary background on
bricks, $\tau$-rigidity, semibricks and wide subcategories, representation varieties, the
$\tau$-tilting fan, and the numerical torsion classes. Section~\ref{sec:proofs}
contains the three results outlined above and the proof of Theorem~\ref{thm:main-intro} and Corollary~\ref{cor:main-intro},  where we also compare our results with some important open problems.

\section{Preliminaries and background}\label{sec:prelim}

In this section, we briefly recall the notions and known results used in the rest. For all the undefined terminology and rudimentary facts from the representation theory of finite dimensional algebras, we refer to \cite{ASS}.

\subsection{Bricks and \texorpdfstring{$\tau$}{tau}-rigid modules}\label{Subsection: bricks and tau-rigids}
For $M \in \modu A$, let $|M|$ denote the number of pairwise non-isomorphic indecomposable
summands of $M$; in particular $|A|=n$. Recall that $M$ is said to be \emph{rigid} if $\Ext^1_A(M,M)=0$, and is $\tau$-\emph{rigid} if $\Hom_A(M,\tau M)=0$. A $\tau$-rigid module $M$ with $|M|=n$ is called $\tau$-\emph{tilting}. We use the following standard facts; for details, see
\cite{ASS,AIR}; and also \cite[\S2]{MP4}.

\begin{itemize}
\item[(R1)] Thanks to the Auslander--Reiten duality, there exists a natural surjection
$\Hom_A(M,\tau M) \twoheadrightarrow \overline{\Hom}_A(M,\tau M) \cong \Du\Ext^1_A(M,M)$, where
$\overline{\Hom}$ denotes morphisms modulo those factoring through an injective, and $\Du(-):=\Hom_k(-,k)$.
Particularly, $\tau$-rigid modules are rigid, but not the converse.
\item[(R2)] If $\pd_A M \leq 1$, then $\Hom_A(M,\tau M)=\overline{\Hom}_A(M,\tau M)$, so that one has
$\Hom_A(M,\tau M)\cong \Du\Ext^1_A(M,M)$. Hence a rigid module of projective dimension at most one
is $\tau$-rigid \cite[Remark~2.2(c)]{MP4}.
\item[(R3)] Every $\tau$-rigid module $M$ satisfies $|M|\leq n$. This bound is due to Skowro\'nski;
see \cite[VIII, Lemma~5.3]{ASS}, as well as \cite[\S2]{AIR}.
\item[(R4)] If $M$ is $\tau_A$-rigid and $J\subseteq A$ is a two-sided ideal with $JM=0$, then
$M$ is $\tau_{A/J}$-rigid. This classical fact, going back to Skowro\'nski, can be found in
\cite{ASS}; see also \cite[Remark~2.2(b)]{MP4}.
\end{itemize}

\subsection{Semibricks and wide subcategories}
A \emph{semibrick} is a (possibly infinite) set $\mathcal{S}\subseteq \brick(A)$ of pairwise
Hom-orthogonal bricks, that is, for all distinct pairs $X,Y\in \mathcal{S}$, we have $\Hom_A(X,Y)=0=\Hom_A(Y,X)$.
We write $\sbrick(A)$ for the set of all semibricks in $\modu A$. A full subcategory $\W\subseteq\modu A$ is a \emph{wide subcategory} if it is closed under kernels, cokernels and extensions; thus $\W$ is an exact abelian subcategory. We let $\wide(A)$ denote the set of wide subcategories. For a class $\mathcal{C}$ of modules, $\Filt(\mathcal{C})$ is the full subcategory of modules admitting a finite filtration with subquotients in $\mathcal{C}$.

The following classical correspondence, due to Ringel \cite{Ri}, is well known and will be useful in our study below (see also \cite{As3,MS, MP4}).

\begin{prop}\label{prop:semibrick-wide}
The assignment $\mathcal{S}\mapsto \Filt(\mathcal{S})$ is a bijection from $\sbrick(A)$ to
$\wide(A)$, with inverse $\W\mapsto \simp(\W)$, where $\simp(\W)$ is the set of simple objects of
the abelian category $\W$. In particular, for $\mathcal{S}\in\sbrick(A)$ the simple objects of
$\Filt(\mathcal{S})$ are precisely the elements of $\mathcal{S}$.
\end{prop}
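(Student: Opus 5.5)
The proof splits into the two halves of the bijection: first that $\Filt(\mathcal{S})$ is wide with simple objects exactly $\mathcal{S}$, and then that every wide subcategory is recovered from its simples. The core of the argument is a lemma of Ringel's type, which controls morphisms from a single brick of $\mathcal{S}$ into an object of $\Filt(\mathcal{S})$.

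\textbf{From wide subcategories to semibricks.} Let $\W$ be wide, so $\W$ is an abelian exact subcategory of $\modu A$. Every object of $\W$ has finite length in $\W$, because proper subobjects in $\W$ have strictly smaller $k$-dimension. Schur's lemma in $\W$ gives two facts about simple objects $S,S'$ of $\W$:
\begin{itemize}
\item[(i)] $\End_A(S)$ is a finite dimensional division $k$-algebra, hence equals $k$ since $k$ is algebraically closed;
\item[(ii)] $\Hom_A(S,S')=0$ whenever $S\not\cong S'$.
\end{itemize}
So $\simp(\W)$ is a semibrick. A composition series in $\W$ (exact in $\modu A$ by exactness of the inclusion) shows $\W\subseteq \Filt(\simp(\W))$. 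The reverse inclusion holds because $\W$ is closed under extensions. Hence $\Filt(\simp(\W))=\W$.

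\textbf{Key lemma.} Fix a semibrick $\mathcal{S}$. Let $X\in\Filt(\mathcal{S})$, $S\in\mathcal{S}$, and let $f\colon S\to X$ be nonzero. Then $f$ is a monomorphism and $\operatorname{coker} f\in\Filt(\mathcal{S})$. I prove this by induction on the filtration length of $X$, using an exact sequence $0\to X'\to X\xrightarrow{p} S'\to 0$ with $X'\in\Filt(\mathcal{S})$ and $S'\in\mathcal{S}$. There are two cases.
\begin{itemize}
\item[(a)] If $pf\neq 0$, the semibrick condition forces $S\cong S'$ and $pf$ to be a nonzero scalar, hence an isomorphism. Then $f$ is split mono and $\operatorname{coker} f\cong X'$.
\item[(b)] If $pf=0$, then $f$ factors through $X'$. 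The induction hypothesis applies there, and the snake lemma gives an extension of $S'$ by $\operatorname{coker}(S\to X')$, which lies in $\Filt(\mathcal{S})$.
\end{itemize}
The dual statement, for nonzero maps $X\to S$ (epi with kernel in $\Filt(\mathcal{S})$), is proved the same way.

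\textbf{$\Filt(\mathcal{S})$ is wide.} Closure under extensions is immediate. For a morphism $g\colon X\to Y$ in $\Filt(\mathcal{S})$, I show $\ker g$, $\operatorname{im} g$ and $\operatorname{coker} g$ lie in $\Filt(\mathcal{S})$ by induction on the length of $X$. Take $S\subseteq X$ the bottom term of a filtration, so $X/S\in\Filt(\mathcal{S})$, and consider $g|_S$.
\begin{itemize}
\item[(a)] If $g|_S=0$, then $g$ factors through $X/S$, which has smaller length, and induction applies.
\item[(b)] If $g|_S\ne 0$, the key lemma makes $g|_S$ mono with $Y/g(S)\in\Filt(\mathcal{S})$. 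Induction applies to the induced map $X/S\to Y/g(S)$. Its kernel and cokernel coincide with $\ker g$ and $\operatorname{coker} g$, since $S\cap\ker g=0$.
\end{itemize}

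\textbf{Simple objects of $\Filt(\mathcal{S})$.} Each $S\in\mathcal{S}$ is simple in $\Filt(\mathcal{S})$. Indeed, a nonzero subobject $U\hookrightarrow S$ in $\Filt(\mathcal{S})$ receives a nonzero map from some $S''\in\mathcal{S}$, namely the bottom term of a filtration of $U$. The composite $S''\to S$ is then nonzero, so $S''\cong S$ and the composite is an isomorphism, forcing $U=S$. Conversely, a simple object of $\Filt(\mathcal{S})$ has a filtration of length one, so it lies in $\mathcal{S}$. This gives $\simp(\Filt(\mathcal{S}))=\mathcal{S}$ and completes the bijection.

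\textbf{Main obstacle.} The step I expect to need the most care is the key lemma, in particular the diagram chase in case (b) and its use in the kernel/cokernel induction.
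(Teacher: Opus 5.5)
The paper gives no proof of this proposition. It quotes it as Ringel's classical result, and your argument is a correct, self-contained version of that standard proof: the lemma that a nonzero map from $S\in\mathcal{S}$ into an object of $\Filt(\mathcal{S})$ is a monomorphism with cokernel in $\Filt(\mathcal{S})$, followed by induction on filtration length.

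Two details in the wideness step are left implicit, and both are harmless because $\Filt(\mathcal{S})$ is closed under extensions:
\begin{itemize}
\item In case (a), $\ker g$ is not equal to $\ker\bar g$; it is an extension of $\ker\bar g$ by $S$.
\item In case (b), $\operatorname{im} g$ is an extension of $\operatorname{im}\bar g$ by $g(S)\cong S$.
\end{itemize}
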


Since a wide subcategory is closed under extensions, for $X,Y\in\W$ the Yoneda group of
$1$-extensions computed in $\W$ coincides with $\Ext^1_A(X,Y)$. We will use this identification
without further comment.

\subsection{Representation varieties and orbits}
For each dimension vector $d$, let $\rep(A,d)$ be the affine variety parametrizing the $A$-modules of
dimension vector $d$, on which the group $\GL(d)$ acts by conjugation with orbits $\mathcal{O}_X$, where $X\in\rep(A,d)$. The variety $\rep(A,d)$ has only finitely many irreducible components, and $\Irr(A,d)$ denotes the set of all these components. We further recall the \emph{Voigt's Lemma}: if $\Ext^1_A(X,X)=0$, then $\mathcal{O}_X$ is open in the component of $\rep(A,d)$ containing it; see
\cite[\S2]{MP2}. Since an irreducible variety contains at most one dense orbit and $\rep(A,d)$ has
only finitely many components, a rigid module of dimension vector $d$ has an open orbit in its
component, and there are only finitely many isoclasses of such modules. Combined with our earlier remark \textup{(R1)} from Subsection \ref{Subsection: bricks and tau-rigids}, we record this well-known observation for later use.

\begin{lem}\label{lem:rigid-finite}
For each dimension vector $d$, the algebra $A$ admits only finitely many isoclasses of rigid
modules of dimension vector $d$. In particular, only finitely many bricks of a fixed dimension vector can be $\tau$-rigid.
\end{lem}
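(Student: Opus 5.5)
The first assertion follows from finiteness of irreducible components and Voigt's Lemma. The second follows from (R1).

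Fix a dimension vector $d$ and let $X\in\rep(A,d)$ be rigid, i.e.\ $\Ext^1_A(X,X)=0$. By Voigt's Lemma, the orbit $\mathcal{O}_X$ is open in the irreducible component $C\in\Irr(A,d)$ containing it. Since $C$ is irreducible, a nonempty open subset of $C$ is dense. Two distinct orbits are disjoint, so two open dense orbits in the same irreducible $C$ would have to intersect, which is impossible. Hence each component contains at most one orbit of a rigid module. Every point of $\rep(A,d)$ lies in some component, and $\Irr(A,d)$ is finite. Therefore the number of isoclasses of rigid modules of dimension vector $d$ is at most $|\Irr(A,d)|$.

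One point needs care. A rigid module could lie in several components at once, and its orbit need only be open in one of them. This does not affect the count: we assign each rigid orbit to one chosen component in which it is open, and this assignment is injective by the argument above.

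For the second assertion, let $X$ be a $\tau$-rigid brick of dimension vector $d$. By (R1), $\Du\Ext^1_A(X,X)$ is a quotient of $\Hom_A(X,\tau X)=0$, so $X$ is rigid. The first part then bounds the number of such bricks.

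There is no real obstacle here. The only subtlety is the care about components noted above.
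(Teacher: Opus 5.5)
Your proof is correct and is essentially the paper's argument: Voigt's Lemma, the finiteness of $\Irr(A,d)$, and the fact that an irreducible component contains at most one dense orbit, followed by (R1) to pass from $\tau$-rigid to rigid. Your extra care about a rigid module lying in several components is harmless but unnecessary. Voigt's Lemma actually makes $\mathcal{O}_X$ open in $\rep(A,d)$ itself, so a rigid module lies in a unique component.
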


\subsection{The \texorpdfstring{$\tau$}{tau}-tilting fan and rigid \texorpdfstring{$g$}{g}-vectors}
We identify $K_0(\proj A)\cong\mathbb{Z}^n$ via the basis given by the indecomposable projectives. Furthermore, we set $K_0(\proj A)_{\mathbb{R}}:=K_0(\proj A)\otimes_{\mathbb{Z}}\mathbb{R}$. Elements of
$K_0(\proj A)$ are often called \emph{$g$-vectors}. By \cite{AIR,DIJ}, the $g$-vectors of indecomposable
$\tau$-rigid pairs are the rays of a simplicial fan $F_A$ in $K_0(\proj A)_{\mathbb{R}}$, to which, we refer to as the \emph{$\tau$-tilting fan} of $A$ (also known as the $g$-fan of $A$). A $g$-vector (real, rational or integral) is \emph{rigid} if it
lies in the support of $F_A$, namely, in a cone of $F_A$; rigidity is constant along rays. By \cite[Theorem~4.7]{As2} (see also \cite{DIJ}), $A$ is brick-infinite if and only if $F_A$ is not
complete in $K_0(\proj A)_{\mathbb{R}}=\mathbb{R}^n$, equivalently, if and only if there exists a
non-rigid \emph{real} $g$-vector $\theta\in K_0(\proj A)_{\mathbb{R}}$. It is not known in general
whether brick-infiniteness forces the existence of a non-rigid \emph{integral} (equivalently,
rational) $g$-vector: First posed as a question by Demonet \cite[Question 3.49]{De}, and now treated as a conjecture, he asked if every rational element of $K_0(\proj A)$ being rigid implies that $A$ is brick-finite (for more details, see \cite{MP2} and \cite{Pf}). Producing an integral non-rigid $g$-vector in the situation of interest to
us is precisely the role of Proposition~\ref{prop:nonrigid-g} below.

\subsection{Numerical torsion classes and non-rigid $g$-vectors}
We first recall that $\T\subseteq\modu A$ is a \emph{torsion class} if it is closed under quotients and
extensions, and that $\T(\mathcal{C})$ denotes the smallest torsion class containing a class
$\mathcal{C}$. Following King and \cite[\S2]{As2}, for $\theta\in K_0(\proj A)_{\mathbb{R}}$, viewed
as a linear form on $K_0(\modu A)_{\mathbb{R}}$, the \emph{numerical torsion class}
\[
\overline{\T}^{\theta} \ :=\ \{\, M\in\modu A \ \mid \ \theta(N)\geq 0 \text{ for every quotient } N \text{ of } M\,\}
\]
is functorially finite precisely when $\theta$ is rigid. An important ingredient of our argument is the following recent theorem of Asai, which provides infinite semibricks attached to non-rigid integral
$g$-vectors. It rests on the theory of bicompact torsion classes and on the geometric invariant theory of quiver representations treated by King \cite{Ki}.

\begin{prop}[{\cite[Theorem~5.4]{As2}}]\label{prop:asai}
Assume $k$ is algebraically closed and let $\theta\in K_0(\proj A)$ be an integral $g$-vector. If
$\theta$ is not rigid, then there exists an infinite semibrick $\mathcal{S}$ such that
$\overline{\T}^{\theta}=\T(\mathcal{S})$. In particular, $A$ admits an infinite semibrick.
\end{prop}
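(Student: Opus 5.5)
The statement to be proved is Proposition~\ref{prop:asai}, which is quoted from \cite[Theorem~5.4]{As2}. Within this paper no argument is needed beyond the citation; still, here is the route I would take to reconstruct it.

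Fix an integral $\theta\in K_0(\proj A)$ that is not rigid. The first step is to use the dictionary recalled above: $\theta$ rigid if and only if $\overline{\T}^{\theta}$ is functorially finite. So $\overline{\T}^{\theta}$ is not functorially finite. Consider also the companion class $\T^{\theta}$, defined with strict inequality $\theta(N)>0$ for every nonzero quotient $N$. By King \cite{Ki}, the interval $[\T^{\theta},\overline{\T}^{\theta}]$ has heart equal to the category $\W_\theta$ of $\theta$-semistable modules. This heart is a wide subcategory, so by Proposition~\ref{prop:semibrick-wide} it is $\Filt(\mathcal{S}_\theta)$, where $\mathcal{S}_\theta$ is the semibrick of $\theta$-stable modules.

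The second step is to show $\overline{\T}^{\theta}=\T(\T^{\theta}\cup\mathcal{S}_\theta)$. One then argues, following Asai's notion of bicompact torsion classes, that for integral $\theta$ the class $\T^{\theta}$ is itself generated by $\theta'$-stable bricks for nearby integral parameters. Iterating within the chamber structure should give $\overline{\T}^{\theta}=\T(\mathcal{S})$ for some semibrick $\mathcal{S}$ that contains $\mathcal{S}_\theta$.

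The third step, finiteness versus infiniteness, is the crux. If $\mathcal{S}$ were finite, then $\T(\mathcal{S})$ would be generated by finitely many modules. Since $\overline{\T}^{\theta}$ is also closed under the relevant approximations, coming from its numerical description, it would then be functorially finite. That contradicts the first step, so $\mathcal{S}$ must be infinite.

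The main obstacle is making the middle step rigorous. One has to produce a genuine semibrick, with pairwise Hom-orthogonality across different stability parameters, rather than just a generating family of bricks. Here integrality of $\theta$ is essential: it lets one use GIT and semi-invariants of weight $m\theta$ to separate stable points, and I would lean entirely on \cite{As2} for this part.
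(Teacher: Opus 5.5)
Within the paper this proposition has no proof of its own: it is imported verbatim from \cite[Theorem~5.4]{As2}, so your opening sentence already matches the paper. The reconstruction you then sketch has a genuine gap, and it sits exactly at the step you call the crux.

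\textbf{The third step.} You argue that if $\mathcal{S}$ were finite, then $\overline{\T}^{\theta}=\T(\mathcal{S})$ would be generated by finitely many modules, and hence functorially finite. The second implication is false in general. A torsion class is functorially finite precisely when it equals $\Fac N$ for some module $N$, whereas $\T(M)=\Filt(\Fac M)$ is usually larger.

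For a counterexample, take the Kronecker algebra and let $R_\lambda$ and $R_\mu$ be quasi-simple modules in two different homogeneous tubes.
\begin{itemize}
\item ${}^{\perp}R_\mu$ is a torsion class containing $R_\lambda$, so $\T(R_\lambda)\subseteq{}^{\perp}R_\mu$. Thus $\T(R_\lambda)$ contains a regular module but does not contain $R_\mu$.
\item Every functorially finite torsion class of the Kronecker algebra has the form $\Fac T$ with $T$ support $\tau$-tilting. Any such class containing one regular module contains all of them.
\end{itemize}
So $\T(R_\lambda)$ is generated by a single brick and is not functorially finite. Your remark that $\overline{\T}^{\theta}$ is ``closed under the relevant approximations'' has no content that would repair this.

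This step is also not a formality. If $\mathcal{S}$ is any semibrick with $\T(\mathcal{S})=\overline{\T}^{\theta}$, then $\mathcal{S}$ is infinite if and only if $\overline{\T}^{\theta}$ is not of the form $\T(M)$. Indeed, suppose $\T(\mathcal{S})=\T(M)$. Then $M\in\T(\mathcal{S}')$ for some finite $\mathcal{S}'\subseteq\mathcal{S}$, so $\T(\mathcal{S})=\T(\mathcal{S}')$. Any $S\in\mathcal{S}\setminus\mathcal{S}'$ would then have a nonzero submodule in $\Fac\mathcal{S}'$, contradicting Hom-orthogonality. What you actually need is therefore: for integral $\theta$, $\overline{\T}^{\theta}=\T(M)$ forces $\theta$ to be rigid. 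That is essentially the infiniteness assertion itself, and it needs a real argument.

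\textbf{The second step.} This is only a heuristic. You give no mechanism by which $\T^{\theta}$ is generated by stable modules at nearby parameters. You give no reason the ``iteration within the chamber structure'' terminates or recovers $\overline{\T}^{\theta}$ exactly. You also give no reason the bricks collected at different parameters are pairwise Hom-orthogonal, as you yourself acknowledge.

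\textbf{The first step.} This is correct. The heart of $[\T^{\theta},\overline{\T}^{\theta}]$ is $\W_\theta=\Filt(\mathcal{S}_\theta)$, and $\overline{\T}^{\theta}=\T(\T^{\theta}\cup\mathcal{S}_\theta)$.

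The two remaining ingredients are a genuine semibrick generating $\overline{\T}^{\theta}$, and a proof that $\overline{\T}^{\theta}$ is not finitely generated as a torsion class when $\theta$ is integral and non-rigid. Both are exactly what the cited theorem delivers. As written, your text should stand as a citation, as in the paper, rather than as a proof.
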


\subsection{Wild and strictly wild algebras}
Let $K_3$ denote the $3$-Kronecker quiver and let $H = k(K_3)$ the corresponding path algebra. Recall that $A$ is \emph{strictly wild} if there exists a $k$-linear exact functor
$F\colon \modu H \to\modu A$ which is full and faithful and preserves indecomposability. One can take $F$ as $F = - \otimes_H M_A$ where $M$ is an $H-A$ bimodule such that $M$ is finitely generated projective as a left $H$-module, see \cite[Chapter XIX, Cor. 1.6]{SS}. In particular, note that $F$ sends non-isomorphic bricks to non-isomorphic bricks.  
It is also worth emphasizing that if $M,N$ have the same dimension in $\modu H$, then $F(M), F(N)$ have the same dimension in $\modu A$.

\medskip

In the following, we will freely use two standard facts. First, it is well known that the path algebra of a (connected) wild quiver is strictly wild. Second, if some wide subcategory $\W\subseteq\modu A$ contains a full additive subcategory
that is equivalent to $\rep(Q)$ for a wild quiver $Q$, then $A$ is strictly wild. Indeed, composing
$\modu H \to\modu kQ\cong\rep(Q)\hookrightarrow\W\hookrightarrow\modu A$ yields a $k$-linear fully faithful exact embedding $\modu H \to \modu A$.

\section{Almost all bricks being \texorpdfstring{$\tau$}{tau}-rigid implies brick-finiteness}\label{sec:proofs}

In our treatment of the main result, we first discuss the strictly wild case, which should be known to experts.

\begin{prop}\label{prop:wild}
If $A$ is strictly wild, then $A$ admits infinitely many bricks of the same dimension vector. In
particular, $A$ admits infinitely many bricks which are not $\tau$-rigid.
\end{prop}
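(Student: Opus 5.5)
We plan to transport an infinite family of bricks of a fixed dimension vector from $\modu H$ to $\modu A$ along the functor $F$ from the definition of strict wildness, and then apply Lemma~\ref{lem:rigid-finite}.

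\textbf{Step 1: bricks in $\modu H$.} Fix the dimension vector $d=(1,1)$ for the $3$-Kronecker quiver $K_3$, with vertices $1,2$ and arrows $a,b,c\colon 1\to 2$. A representation of dimension vector $d$ is a triple $(\lambda_a,\lambda_b,\lambda_c)\in k^3$. We will show the following.
\begin{itemize}
\item Whenever the triple is nonzero, the representation $M_\lambda$ is a brick. An endomorphism is a pair of scalars $(s,t)$ with $t\lambda_x=\lambda_x s$ for all arrows $x$. Some $\lambda_x\neq 0$, so $s=t$ and $\End(M_\lambda)\cong k$.
\item $M_\lambda\cong M_\mu$ if and only if $\lambda$ and $\mu$ lie on the same line, i.e.\ define the same point of $\mathbb{P}^2$.
\end{itemize}
Since $k$ is algebraically closed, hence infinite, $\mathbb{P}^2(k)$ is infinite. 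This gives infinitely many pairwise non-isomorphic bricks of dimension vector $d$. Alternatively, one can take the one-parameter family $\lambda=(1,t,0)$, $t\in k$.

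\textbf{Step 2: transport to $\modu A$.} Apply $F=-\otimes_H M_A$. As recorded in the preliminaries, $F$ is fully faithful, so $\End_A(F(X))\cong\End_H(X)\cong k$ and $F$ sends bricks to bricks. It also reflects isomorphisms, so non-isomorphic bricks go to non-isomorphic bricks.

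The remaining point is that all images have the same dimension vector. The paper notes that $F$ preserves dimension for modules of equal dimension in $\modu H$, but we should make sure this holds at the level of dimension vectors over $A$, since Lemma~\ref{lem:rigid-finite} is phrased in terms of dimension vectors. Because $M$ is finitely generated projective as a left $H$-module, $F$ is exact. Moreover, $\underline{\dim}\,F(X)$ depends only on the class of $X$ in $K_0(\modu H)$: writing $e_i$ for the idempotents of $A$, we have $\dim_k e_i(X\otimes_H M)=\dim_k (X\otimes_H Me_i)$, and $Me_i$ is projective over $H$, so this quantity is additive in $X$ and determined by $\underline{\dim}\,X$. Hence the bricks $F(M_\lambda)$ form infinitely many pairwise non-isomorphic bricks of one common dimension vector $e\in\mathbb{Z}^n_{\geq 0}$.

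\textbf{Step 3: conclusion.} By Lemma~\ref{lem:rigid-finite}, only finitely many isoclasses of modules of dimension vector $e$ are rigid. By (R1), every $\tau$-rigid module is rigid, so only finitely many of the bricks $F(M_\lambda)$ are $\tau$-rigid. All but finitely many of them are therefore bricks that are not $\tau$-rigid, which proves both assertions of the proposition.

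The only step needing care is the dimension-vector bookkeeping in Step 2. It is resolved by the projectivity of $M$ as a left $H$-module, which makes $X\mapsto\underline{\dim}\,F(X)$ factor through $K_0(\modu H)$; everything else is routine.
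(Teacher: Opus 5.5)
Your proof is correct and follows the paper's argument: take the $(1,1)$-dimensional bricks of $K_3$, push them through the fully faithful exact functor $F=-\otimes_H M_A$, and invoke Lemma~\ref{lem:rigid-finite}. Your extra check that $\underline{\dim}\,F(X)$ depends only on $\underline{\dim}\,X$, using projectivity of each $Me_i$ over $H$, supplies a detail the paper only asserts; with the right-module convention implicit in $-\otimes_H M_A$, the vertex spaces should be written $(X\otimes_H M)e_i\cong X\otimes_H Me_i$ rather than $e_i(X\otimes_H M)$.
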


\begin{proof}
It is well known that the path algebra $H=kK_3$ of the $3$-Kronecker quiver admits infinitely many
bricks of the same dimension vector (for instance, consider the indecomposable representations of dimension
vector $(1,1)$). We consider a $k$-linear functor $F = - \otimes_H M \colon\modu H\to\modu A$ considered above, where $M$ is an $H-A$ bimodule which is finitely generated projective over $H$. It takes non-isomorphic bricks to non-isomorphic bricks, and takes modules of equal dimension to modules of equal
dimension. Applying $F$ to the above infinite family yields infinitely many non-isomorphic bricks of $A$,
all of the same dimension. By Lemma~\ref{lem:rigid-finite}, only finitely many of them can
be $\tau$-rigid, so infinitely many are not.
\end{proof}

The following result is an interesting feature of semibricks over algebras that are not strictly wild. It essentially says that over every non-strictly wild algebra, an infinite semibrick always contains an infinite sub-semibrick consisting of pairwise $\Ext_A^1$-orthogonal bricks. 

\begin{prop}
    
\label{lem:thin}
Assume that $A$ is not strictly wild, and let $\mathcal{S}=\{B_i\}_{i\in\mathbb{N}}$ be an infinite
semibrick. Then there exists an infinite sub-semibrick $\{C_i\}_{i\in\mathbb{N}}\subseteq\mathcal{S}$
such that $\Ext^1_A(C_i,C_j)=0$ for all $i\neq j$.
\end{prop}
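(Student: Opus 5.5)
The plan is a Ramsey-type argument combined with a wildness criterion via extensions between Hom-orthogonal bricks. For $i<j$, colour the pair $\{i,j\}$ by the vanishing pattern of the two spaces $\Ext^1_A(B_i,B_j)$ and $\Ext^1_A(B_j,B_i)$. This gives four colours. By the infinite Ramsey theorem there is an infinite subset $N\subseteq\mathbb{N}$ all of whose pairs have the same colour. If that colour is ``both vanish'', the family $\{B_i\}_{i\in N}$, re-indexed, is the required sub-semibrick. It is automatically a semibrick, since $\mathcal{S}$ is one. So it suffices to show that each of the other three colours forces $A$ to be strictly wild, contradicting the hypothesis.

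Suppose, say, that $\Ext^1_A(B_i,B_j)\neq 0$ for all $i<j$ in $N$. The case $i>j$ is symmetric, and the case where both directions are nonzero is covered by either. Pick $i_0<i_1<\dots<i_5$ in $N$ and set $X=B_{i_0}$ and $Y_t=B_{i_t}$ for $t=1,\dots,5$. Then $X,Y_1,\dots,Y_5$ are pairwise Hom-orthogonal bricks and $\Ext^1_A(X,Y_t)\neq0$ for every $t$. Choose a nonzero class $\eta_t\in\Ext^1_A(X,Y_t)$ for each $t$. Let $\Sigma$ be the star quiver with one source $0$ and five sinks $1,\dots,5$, that is, $\widetilde{D}_4$ with one extra arm. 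Its Tits form is negative at $(2,1,1,1,1,1)$, so $\Sigma$ is a connected wild quiver.

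The key step is to build a functor $G\colon \rep(\Sigma)\to\modu A$ with the following properties: it is $k$-linear, exact and fully faithful, and it lands in the wide subcategory $\W=\Filt(X,Y_1,\dots,Y_5)$. On a representation $V$, take $G(V)$ to be the extension of $X\otimes V_0$ by $\bigoplus_t Y_t\otimes V_t$ classified by $\sum_t \eta_t\otimes V_{\alpha_t}$. This is the standard Ringel-type construction of modules $M$ with a short exact sequence $0\to \bigoplus_t Y_t^{a_t}\to M\to X^{b}\to 0$. Since $\Hom_A(Y_t,X)=0$, every morphism between such modules preserves the filtration. Since $\Hom_A(X,Y_t)=0$ and all endomorphism rings are $k$, it induces morphisms on the top and bottom parts compatible with the classes $\eta_t$, and conversely. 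Exactness follows from the $3\times3$ lemma, because the construction is functorial in the two ends.

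This yields a fully faithful exact embedding $\rep(\Sigma)\hookrightarrow\W\hookrightarrow\modu A$. By the standard facts recalled in the subsection on wild and strictly wild algebras, $A$ is then strictly wild, which is a contradiction.

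The main obstacle is making this full embedding rigorous. One has to check fullness carefully: a morphism $G(V)\to G(V')$ must respect the filtrations, and the $\eta_t$ must be linearly independent in the appropriate sense. Only one class is used per sink, and the sinks are distinct non-isomorphic bricks, so the components of the classifying element live in different summands $\Ext^1_A(X,Y_t)$. Self-extensions of $X$ or of the $Y_t$, and extensions $\Ext^1_A(Y_s,Y_t)$, never enter, because the top is semisimple in $\add X$ and the bottom is semisimple in $\add(\bigoplus_t Y_t)$. If fullness proves delicate, I would instead restrict to the $\Ext$-quiver of the finite length category $\W$ and invoke the fact that a wide subcategory whose $\Ext$-quiver has a wild bipartite subquiver, with Hom-orthogonal simples, contains $\rep$ of that wild quiver.
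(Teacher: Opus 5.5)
Your proposal is correct. Its core step is the same as the paper's: a member $X$ of the semibrick with nonvanishing $\Ext^1_A(X,Y_t)$ into five other members gives a fully faithful exact embedding of representations of the five-arm star quiver into $\Filt(X,Y_1,\dots,Y_5)$, which forces strict wildness.

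Where you differ is how the infinite $\Ext$-orthogonal family is extracted.

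\textbf{The paper.} It applies the star obstruction at every index $i$. This shows that each $B_i$ has nonzero $\Ext^1$ towards at most four other members and from at most four other members. So the graph joining $i$ and $j$ when $\Ext^1$ is nonzero in either direction has degree at most $8$. An infinite independent set is then chosen greedily.

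\textbf{Your route.} You colour pairs by their vanishing pattern and invoke the infinite Ramsey theorem. The star obstruction then only has to be applied inside a homogeneous set. You handle both one-directional colours correctly by choosing $X$ as the smallest or the largest index.

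\textbf{What each buys.} The paper's argument uses only a greedy selection. It also gives more information: the degree bound holds for every semibrick, finite or infinite, over a non-strictly-wild algebra. For example, a finite semibrick of size $N$ then contains a pairwise $\Ext$-orthogonal subfamily of size at least $\lceil N/9\rceil$; a finite Ramsey argument gives only a logarithmic bound. Your argument is quicker to state but provides no such uniform control.

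\textbf{The embedding.} You choose a single nonzero class $\eta_t\in\Ext^1_A(X,Y_t)$ per sink and only claim a full embedding. The paper instead uses $\dim_k\Ext^1_A(B_i,B_{j_l})$ arrows and an equivalence onto its category $\mathcal{D}$. Your version is a slight simplification, and the details go through.

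\begin{itemize}
\item $\Hom_A(Y_t,X)=0$ makes morphisms preserve the two-step filtration.
\item $\End_A=k$ together with pairwise Hom-orthogonality makes the induced maps of the form $1\otimes\phi$.
\item $\eta_t\otimes\psi=0$ forces $\psi=0$, which gives fullness.
\end{itemize}

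Make explicit that $\Hom_A(X,Y_t)=0$ is what makes the lift of a compatible pair $(1\otimes\phi_0,\bigoplus_t 1\otimes\phi_t)$ unique. That uniqueness is needed both for $G$ to be a well-defined additive functor and for faithfulness; it also gives $G(0)=0$, which your $3\times 3$-lemma exactness argument uses. Your fallback via the $\Ext$-quiver of $\W$ is unnecessary.
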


\begin{proof}
For $i\in\mathbb{N}$ put
\[
E(i\!\to)=\{\,j\neq i \mid \Ext^1_A(B_i,B_j)\neq 0\,\}, \qquad
E(\to\! i)=\{\,j\neq i \mid \Ext^1_A(B_j,B_i)\neq 0\,\}.
\]
We claim that $|E(i\!\to)|\leq 4$ and $|E(\to\! i)|\leq 4$ for every $i$.

Suppose for contradiction that $E(i\!\to)$ contains five distinct indices $j_1,\dots,j_5$, and set
$\mathcal{S}'=\{B_i,B_{j_1},\dots,B_{j_5}\}$ and $\W'=\Filt(\mathcal{S}')$. By
Proposition~\ref{prop:semibrick-wide}, $\W'$ is a wide subcategory whose simple objects are exactly
the six bricks of $\mathcal{S}'$. Consider the full subcategory $\mathcal{D}\subseteq\W'$ of objects
$M$ admitting a short exact sequence
\[
0\longrightarrow M'\longrightarrow M\longrightarrow M''\longrightarrow 0
\]
with $M'\in\add\{B_{j_1},\dots,B_{j_5}\}$ and $M''\in\add B_i$; equivalently, the objects of $\W'$ of loewy length at most two
whose socle lies in $\add\{B_{j_1},\dots,B_{j_5}\}$ and whose top lies in $\add B_i$. As
$\mathcal{S}'$ is a semibrick and the $B$'s are bricks over a basic algebra over an algebraically closed field, we have
$\End_A(B_i)\cong k\cong\End_A(B_{j_l})$ and $\Hom_A(B_i,B_{j_l})=0=\Hom_A(B_{j_l},B_i)$ for all $l$.
As $\Hom_A(B_{j_l},B_i)=0$, for $M,N\in\mathcal{D}$, every morphism $M\to N$ maps the submodule layer of $M$ (that is, the largest submodule of $M$ in $\add\{B_{j_1},\dots,B_{j_5}\}$) into the
submodule layer of $N$. Therefore every morphism $M\to N$ is a morphism of the corresponding
two-step filtrations. 

Let $Q'$ be the quiver with a source vertex $v$, five sink vertices $w_1,\dots,w_5$, and
$\dim_k\Ext^1_A(B_i,B_{j_l})\geq 1$ arrows $v\to w_l$ for $1\leq l\leq 5$. A representation of $Q'$
with dimension vector having dimension $b$ at $v$ and $a_l$ at $w_l$ is exactly the datum of an extension class in
\[
\Ext^1_A\!\big(B_i^{\,b},\textstyle\bigoplus_{l}B_{j_l}^{\,a_l}\big)
\ \cong\ \bigoplus_{l}\Ext^1_A(B_i,B_{j_l})^{\,a_l\times b},
\]
and the above analysis of morphisms shows that this assignment is an an equivalence of $\rep(Q')$ onto
$\mathcal{D}$. Now the
underlying graph of $Q'$ contains the star with five outgoing arrows from a source vertex, which is a connected wild quiver. By the discussion at the end of
Section~\ref{sec:prelim}, the embedding $\rep(Q')\simeq\mathcal{D}\subseteq\W'\subseteq\modu A$ makes
$A$ strictly wild, contradicting our hypothesis. Therefore $|E(i\!\to)|\leq 4$. The bound
$|E(\to\! i)|\leq 4$ follows by the dual argument.

Now form the (undirected) graph $G$ on the vertex set $\mathbb{N}$ in which $i$ and $j$ are joined
whenever $\Ext^1_A(B_i,B_j)\neq 0$ or $\Ext^1_A(B_j,B_i)\neq 0$. By the claim, each vertex of $G$ has
degree at most $|E(i\!\to)|+|E(\to\! i)|\leq 8$. An infinite graph of bounded degree contains an
infinite independent set: choose $i_1$, delete it together with its at most $8$ neighbours, choose
$i_2$ among the infinitely many remaining vertices, and iterate. The resulting family
$\{C_t\}_{t\in\mathbb{N}}=\{B_{i_t}\}_{t\in\mathbb{N}}$ is pairwise non-adjacent in $G$, that is,
$\Ext^1_A(C_s,C_t)=0=\Ext^1_A(C_t,C_s)$ for all $s\neq t$. This is the desired infinite sub-semibrick.
\end{proof}

To emphasize the significance of the previous proposition, in the following we make a remark on strictly wild algebras that must be known to experts.

\begin{remark}
Note that $A$ is strictly wild if and only if there exists an infinite semibrick $\{B_i\}_{i \in \mathbb{N}}$ such that $\Ext^1_A(B_i, B_j)\ne 0$ for all $i,j$. Indeed, if $A$ is a strictly wild algebra, then using the strictly wild hereditary algebra $H=kK_3$ and dimension vector $(1,1)$, we have an infinite semibrick $\{B_i\}_{i\in \mathbb{N}}$ over $H$ with the property that $\Ext^1_{H}(B_i, B_j)\ne 0$ for all $i,j$. The corresponding representation embedding $F:\modu H \to \modu A$ yields the infinite semibrick $\{F(B_i)\}_{i \in \mathbb{N}}$ over $A$ with $\Ext_A^1(F(B_i), F(B_j))\ne 0$ for all $i,j$. This is because, although $F$ does not preserve Ext-spaces in general, it does preserve non-split short exact sequences. For the converse, one just uses ideas similar to the proof of Proposition \ref{lem:thin}.
\end{remark}

\medskip

We are now ready to prove our main result. Let us recall that an algebra $A$ is said to be \emph{minimal brick-infinite} if
$\brick(A)$ is infinite while $\brick(A/J)$ is finite for every nonzero two-sided ideal $J$. In the following, we use two important properties of minimal brick-infinite algebras established in our previous work \cite{MP1,MP3}.

\begin{prop}[{\cite{MP3}, see also \cite[Proposition~6.1]{MP1}}]\label{prop:mintau}
Let $A$ be a minimal brick-infinite algebra. Then, all but finitely many indecomposable $\tau$-rigid
$A$-modules are faithful and of projective dimension one.
\end{prop}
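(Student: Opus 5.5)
We need to prove Proposition~\ref{prop:mintau}: over a minimal brick-infinite algebra $A$, all but finitely many indecomposable $\tau$-rigid modules are faithful and have projective dimension one. Our plan is to establish faithfulness first and then deduce the projective dimension bound from it.

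\textbf{Faithfulness.} Let $J$ be a nonzero two-sided ideal and let $X$ be an indecomposable $\tau$-rigid module with $JX=0$. By (R4), $X$ is $\tau_{A/J}$-rigid. Since $A$ is minimal brick-infinite, $A/J$ is brick-finite, so by \cite{DIJ} it is $\tau$-tilting finite. Hence only finitely many indecomposable $\tau$-rigid $A$-modules are annihilated by $J$.

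It remains to reduce to finitely many ideals $J$. Every non-faithful module $X$ has nonzero annihilator. Any nonzero ideal contains a minimal nonzero two-sided ideal, and such an ideal lies in the socle of $A$ as an $A$-$A$-bimodule. A non-faithful $X$ is therefore annihilated by some minimal nonzero ideal contained in $\operatorname{ann}(X)$.

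Minimal two-sided ideals can come in infinite families, for example when the bimodule socle has a homogeneous component of multiplicity at least two. This is the step we expect to be the main obstacle. We would handle it by replacing ``all ideals'' with the finitely many ideals
\[
J_{ij}:=e_i\operatorname{soc}({}_AA_A)e_j\cdot A \ \text{-closures},
\]
that is, with the bimodule socle pieces $e_i\operatorname{soc}(A)e_j$. Alternatively, we would use the finitely many ideals generated by the full isotypic components $\operatorname{soc}(A)\cap e_iAe_j$. The plan is to argue that if $X$ is annihilated by one nonzero element of such a piece, then a suitable uniserial quotient argument shows $X$ is annihilated by a nonzero ideal from a fixed finite list. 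If that fails, we fall back on a semicontinuity/constructibility argument: the $\tau$-rigid modules of a given $g$-vector are unique, and the set of $g$-vectors of non-faithful $\tau$-rigid modules is finite.

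\textbf{Projective dimension.} Next we show that a faithful $\tau$-rigid $X$ has $\pd X\le 1$. By the standard criterion, $\pd X\le1$ if and only if $\Hom_A(\Du A,\tau X)=0$. Since $X$ is faithful, $A$ embeds into $X^{m}$, and dually $\Du A$ is a quotient of some $X^{m}$. Then $\Hom(\Du A,\tau X)\hookrightarrow\Hom(X^m,\tau X)=0$ by $\tau$-rigidity. Therefore $\pd X\le1$.

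Finally, projective $\tau$-rigid indecomposables are only finitely many, namely at most $n$. So all but finitely many indecomposable $\tau$-rigid modules have projective dimension exactly one, which completes the proof.
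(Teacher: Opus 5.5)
There is a genuine gap, and it sits exactly at the step you flag as the main obstacle. Your other steps are fine. A faithful $\tau$-rigid $X$ has $\Du A\in\Fac(X)$, hence $\Hom_A(\Du A,\tau X)=0$ and $\pd X\le 1$; this is the standard Adachi--Iyama--Reiten observation. For one fixed nonzero ideal $J$, (R4) together with brick-finiteness of $A/J$ does give only finitely many indecomposable $\tau$-rigid modules killed by $J$.

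Passing from one ideal to all non-faithful modules is the whole content of the faithfulness claim, and neither of your repairs achieves it.

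\begin{itemize}
\item The pieces $e_i\operatorname{soc}({}_AA_A)e_j$ are already two-sided ideals. But a $\tau$-rigid module killed by one nonzero element of such a piece need not be killed by the whole piece, even over a minimal brick-infinite algebra. Take the path algebra with arrows $\alpha\colon 1\to 2$, $\beta\colon 2\to 3$, $\gamma\colon 1\to 3$. Every proper quotient is representation-finite, and $\operatorname{soc}({}_AA_A)=\operatorname{span}(\beta\alpha,\gamma)$. The indecomposable module $E$ of dimension vector $(1,0,1)$ with $\gamma$ acting nontrivially is rigid, hence $\tau$-rigid since $A$ is hereditary. It is killed by $\beta\alpha$ but not by $\gamma$.
\item The ``uniserial quotient argument'' that is supposed to land in a fixed finite list of ideals is never specified.
\item Your fallback is circular. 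Since $\tau$-rigid modules are determined by their $g$-vectors, saying that non-faithful $\tau$-rigid modules have only finitely many $g$-vectors is just a restatement of the claim.
\end{itemize}

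Moreover, no argument using only (R4) plus brick-finiteness of the quotients $A/(x)$ can close this gap. Take the Kronecker algebra $A=kK_2$, which is minimal brick-infinite with two-dimensional bimodule socle $e_2Ae_1$. For each nonzero $x\in e_2Ae_1$, the regular simple $M_x=P_1/kx$ is projective over $A/(x)$. So $\bigcup_x i\,\trig(A/(x))$ is infinite, whereas the only non-faithful indecomposable $\tau$-rigid $A$-modules are $S_1$ and $S_2$.

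You therefore have to use $\tau_A$-rigidity itself, not just $\tau_{A/J}$-rigidity. For example, take a minimal presentation $P^1\xrightarrow{p}P^0\to M\to 0$. The AIR criterion says $\Hom_A(p,M)$ is surjective, and lifting elements of $\operatorname{ann}(M)P^0$ through $p$ then gives $\operatorname{ann}(M)P^0=\operatorname{ann}(M)\operatorname{rad}P^0$. This identity rules out the modules $M_x$.

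Turning constraints of this kind into a finiteness statement is a real additional step, and nothing in your proposal supplies it. The paper does not prove it either: it imports the statement from \cite{MP3} and \cite[Proposition~6.1]{MP1}. As written, your proposal establishes only that faithful $\tau$-rigid modules have projective dimension at most one, not that almost all $\tau$-rigid modules are faithful.
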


\begin{prop}[{\cite[Theorem~6.8]{MP1}}]\label{prop:nonrigid-g}
If $A$ is minimal brick-infinite and admits infinitely many $\tau$-rigid bricks, then there exists
a non-rigid $g$-vector.
\end{prop}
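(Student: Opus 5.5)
The plan is to find two $\tau$-rigid bricks that are incompatible and show that the sum of their $g$-vectors cannot lie in any cone of $F_A$. By Proposition~\ref{prop:mintau}, after discarding finitely many modules we may assume the infinite family $\{X_i\}_{i\in\mathbb{N}}$ of $\tau$-rigid bricks consists of faithful modules of projective dimension one. For such a module with minimal projective presentation $P_1^{X}\xrightarrow{f_X}P_0^{X}\to X\to 0$ (with $f_X$ injective), the $g$-vector is $g(X)=[P_0^{X}]-[P_1^{X}]$. By (R2), $\tau$-rigidity of a direct sum of such modules amounts to its rigidity. By (R3), no $n+1$ of the $X_i$ are pairwise compatible. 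So among any $n+1$ of them there are two, say $M=X_i$ and $N=X_j$, such that $M\oplus N$ is not $\tau$-rigid, that is, $\Ext^1_A(M,N)\neq 0$ or $\Ext^1_A(N,M)\neq 0$. The candidate non-rigid integral $g$-vector is
\[
\theta:=g(M)+g(N).
\]
More generally, I would consider $\theta$ of the form $a\,g(M)+b\,g(N)$ with $a,b>0$, or sums over larger incompatible subfamilies, so that there is room to adjust if the two-term sum fails.

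Step two is to argue by contradiction. Suppose $\theta$ is rigid, so it lies in the cone spanned by the $g$-vectors of the summands of some basic $\tau$-rigid pair $(U,P)$. I would then work in the presentation space $\Hom_A(P_1,P_0)$, where $P_1=P_1^M\oplus P_1^N$ and $P_0=P_0^M\oplus P_0^N$, using the Derksen--Fei / Asai description of rigid $g$-vectors. A rigid $\theta$ has a generic presentation whose cokernel is $U^{m}$ plus projective data, and the orbit of this presentation is open and dense. The presentation $f_M\oplus f_N$ is a point of this space, provided there is no cancellation of projective summands between $P_1$ and $P_0$. I expect to obtain this absence of cancellation from faithfulness: a common projective summand of $P_0^M$ and $P_1^N$ should contradict the sincerity forced by faithfulness together with projective dimension one, although this needs checking.

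Step three, which I expect to be the main obstacle, is to show that $f_M\oplus f_N$ must itself be the generic presentation, i.e.\ that $M\oplus N$ is $\tau$-rigid. This would contradict the incompatibility of $M$ and $N$. Semicontinuity of the $E$-invariant $E(f,f)=\dim\Hom_A(\operatorname{coker} f,\tau\operatorname{coker} f)$ runs in the wrong direction here: the generic value $0$ does not force special points to be rigid. So I would instead invoke the $\tau$-convergence property of \cite{MP1} over minimal brick-infinite algebras. The idea is to choose $M$ and $N$ from a subsequence whose normalized $g$-vectors converge, so that the $g$-vectors accumulate in a prescribed direction. Infinitely many pairwise distinct rays $g(X_i)$ cannot all accumulate inside a single cone of the finite-dimensional simplicial fan restricted to a compact region without violating that the cones of $F_A$ meet only along faces. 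Concretely, if the accumulating direction is rigid, it lies in the relative interior of a cone $C$. The rays $g(X_i)$ are themselves rays of $F_A$, so a neighbourhood of that direction meets only the finitely many cones containing $C$. This contradicts the existence of infinitely many distinct rays $g(X_i)$ nearby.

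Making this last compactness argument produce a \emph{rational} limit, for example by replacing the limit with a suitable integral combination such as $\theta$ above, is the delicate point. This is where I would expect to use the specific structure (faithful, projective dimension one) furnished by Proposition~\ref{prop:mintau}.
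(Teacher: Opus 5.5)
\textbf{There is a genuine gap.} Your first step, reducing to faithful modules of projective dimension one via Proposition~\ref{prop:mintau}, matches the paper's. The mechanism you build on it, however, is false. Step~3 claims that the point $f_M\oplus f_N$ of the presentation space of a rigid $\theta$ must lie in the open orbit. This fails, and so does the conclusion that $g(M)+g(N)$ is non-rigid for incompatible $M,N$.

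The Kronecker algebra $k(1\rightrightarrows 2)$ is minimal brick-infinite and already gives a counterexample. Its preprojectives $X_m$ of dimension vector $(m,m+1)$ are, for $m\geq 1$, faithful $\tau$-rigid bricks of projective dimension one with $g(X_m)=(m,1-m)$. For $|a-b|\geq 2$, $X_a\oplus X_b$ is not $\tau$-rigid. Yet $g(X_a)+g(X_b)=(a+b,2-a-b)$ equals $2g(X_c)$ or $g(X_c)+g(X_{c+1})$ for suitable $c$, so it is rigid. Here $X_a\oplus X_b$ is merely a degeneration of the generic cokernel $X_c^2$ or $X_c\oplus X_{c+1}$.

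More generally, every positive combination of the $g(X_m)$ lies in $\{x\geq 0,\ x+y>0\}$. This region is the union of the $g$-vector cones of the $\tau$-tilting modules $X_m\oplus X_{m+1}$ ($m\geq 0$). The only non-rigid ray is spanned by $(1,-1)$. So no weights $a,b$ and no larger incompatible subfamily can rescue this approach.

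Your fallback compactness argument does not close the gap either. Its local-finiteness claim is wrong. The cones of $F_A$ containing the cone of a $\tau$-rigid pair $U$ correspond, via $\tau$-tilting reduction, to the support $\tau$-tilting pairs of the reduced algebra. There are infinitely many of these when that algebra is brick-infinite, for example $U=P_3$ for the path algebra of $3\to 1\rightrightarrows 2$. In that case every neighbourhood of a point of the relative interior of the cone of $U$ meets infinitely many cones.

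More decisively, even a repaired version only yields a \emph{real} accumulation direction. A non-rigid real $g$-vector already exists for every brick-infinite algebra by \cite[Theorem~4.7]{As2}, without minimality or $\tau$-rigid bricks. The whole content of the proposition is that the non-rigid $g$-vector can be taken rational, hence integral after rescaling, which is what Proposition~\ref{prop:asai} requires. This is exactly the point you leave open.

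The missing ingredient is the $\tau$-convergence property of \cite[\S6]{MP1}. An infinite family of $\tau$-rigid bricks of projective dimension one forces this property over a minimal brick-infinite algebra, and from it \cite[Theorem~6.8]{MP1} produces a rational $g$-vector outside $F_A$. The paper's proof consists of exactly this, followed by rescaling.
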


\begin{proof}[Sketch of proof]
By Proposition~\ref{prop:mintau}, all but finitely many of the infinitely many $\tau$-rigid bricks
have projective dimension one. As shown in \cite[\S6]{MP1}, existence of an infinite family of $\tau$-rigid bricks of projective dimension one forces the \emph{$\tau$-convergence property}, and
\cite[Theorem~6.8]{MP1} yields a rational $g$-vector outside the $\tau$-tilting fan. Rescaling
along its ray gives an integral non-rigid $g$-vector.
\end{proof}

\begin{thm}\label{thm:main}
Suppose that $A$ admits infinitely many $\tau$-rigid bricks. Then $A$ admits infinitely many bricks
that are not $\tau$-rigid.
\end{thm}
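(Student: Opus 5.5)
We argue by contradiction. Suppose $A$ admits infinitely many $\tau$-rigid bricks but only finitely many non-$\tau$-rigid bricks. If $A$ is strictly wild, Proposition~\ref{prop:wild} already gives infinitely many non-$\tau$-rigid bricks, so we may assume $A$ is not strictly wild.

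The first step is to reduce to a minimal brick-infinite quotient. Since $A$ is brick-infinite and finite dimensional, we choose a two-sided ideal $J$ maximal among those with $A/J$ brick-infinite. Then $B:=A/J$ is minimal brick-infinite (after passing to a connected block, which is harmless). Every $B$-brick is an $A$-brick. By (R4), a $\tau_A$-rigid brick annihilated by $J$ is $\tau_B$-rigid, but we need the converse direction to hold up to finitely many exceptions. The workable formulation is this: the finitely many non-$\tau_A$-rigid $A$-bricks include all but finitely many of the non-$\tau_B$-rigid $B$-bricks? That implication is not automatic. I would instead run the remaining argument directly on $B$ and produce, inside $\modu B\subseteq \modu A$, an infinite family of $A$-modules that contradicts (R3) or Lemma~\ref{lem:rigid-finite} for $A$.

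We must also check that $B$ still has infinitely many $\tau_B$-rigid bricks. If it did not, then almost all $B$-bricks would be non-$\tau_B$-rigid, and we would need to show these are non-$\tau_A$-rigid, which is the contrapositive of (R4). Hence $B$ has infinitely many $\tau_B$-rigid bricks, or else the $A$-side contradiction is immediate. Also, $B$ is not strictly wild, since a strictly wild quotient would make $A$ strictly wild.

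Next we apply Proposition~\ref{prop:nonrigid-g} to $B$ to obtain a non-rigid integral $g$-vector. Proposition~\ref{prop:asai} then gives an infinite semibrick over $B$, and Proposition~\ref{lem:thin} refines it to an infinite semibrick $\{C_t\}$ of pairwise $\Ext^1$-orthogonal bricks. Only finitely many $B$-bricks are non-$\tau$-rigid (this is the point needing care in the reduction), so almost all $C_t$ are $\tau_B$-rigid. By Proposition~\ref{prop:mintau}, almost all of them are moreover faithful of projective dimension one. For $m$ such bricks, $M=C_{t_1}\oplus\dots\oplus C_{t_m}$ then satisfies $\Ext^1_B(M,M)=0$: the diagonal terms vanish by (R1) and the off-diagonal terms vanish by orthogonality. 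Since $\pd M\le 1$, (R2) makes $M$ $\tau_B$-rigid with $|M|=m$. Taking $m>|B|$ contradicts (R3).

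The main obstacle is the reduction step: making sure that "finitely many non-$\tau$-rigid bricks" passes from $A$ to the minimal brick-infinite quotient $B$. The first thing I would try is to show that a $B$-brick which is $\tau_A$-rigid is $\tau_B$-rigid, which is exactly (R4). This gives that non-$\tau_B$-rigid $B$-bricks are non-$\tau_A$-rigid, so there are finitely many of them. That settles both the "only finitely many non-$\tau_B$-rigid" claim and, since $B$ is brick-infinite, the "infinitely many $\tau_B$-rigid bricks" claim. With that in place, the rest is the chain of propositions above.
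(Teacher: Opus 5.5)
Your proposal is correct and follows essentially the same route as the paper. You reduce to a minimal brick-infinite quotient $B$ via (R4), then chain Propositions~\ref{prop:nonrigid-g}, \ref{prop:asai}, \ref{lem:thin} and \ref{prop:mintau} to produce a $\tau_B$-rigid module with more than $|B|$ non-isomorphic summands, contradicting (R3). The differences are only cosmetic: you dispose of the strictly wild case on $A$ rather than on $B$, which is valid because strict wildness of $B=A/J$ lifts to $A$ through the exact fully faithful inclusion $\modu B\hookrightarrow\modu A$. Also, your initial doubt about transferring finiteness of non-$\tau$-rigid bricks to $B$ is unfounded; as you eventually observe, (R4) gives it directly.
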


\begin{proof}
Assume, for contradiction, that $A$ has only finitely many non-$\tau$-rigid bricks. Together with
the hypothesis, this says that $A$ is brick-infinite and that \emph{almost all} bricks of $A$ are
$\tau$-rigid.

We first argue how to reduce the problem to minimal brick-infinite algebras, then we treat two cases separately. 
Since $A$ is finite dimensional and brick-infinite, we may choose a two-sided ideal $J$ such that $B:=A/J$ is minimal brick-infinite. 
Observe that, from our assumptions, it follows that almost all bricks in $\modu B$ are $\tau_B$-rigid as well. Indeed, $\modu B$ is a full subcategory of $\modu A$, so $\brick(B)\subseteq\brick(A)$. Also, it follows from (R4) that if $X\in\brick(B)$ is not $\tau_B$-rigid then $X$ is not $\tau_A$-rigid.
Therefore, the non-$\tau_B$-rigid bricks of $B$ form a subset of the non-$\tau_A$-rigid bricks of $A$, which is finite. As $B$ is brick-infinite, it therefore admits infinitely many $\tau_B$-rigid
bricks. We now derive a contradiction, distinguishing two cases according to whether or not $B$ is strictly wild.

\medskip

If $B$ is strictly wild, then by Proposition~\ref{prop:wild} the algebra $B$ admits infinitely many
bricks that are not $\tau_B$-rigid. This contradicts the above fact that almost all
bricks of $B$ are $\tau_B$-rigid.

\medskip

Finally, we may assume that $B$ is not strictly wild. Since $B$ is minimal brick-infinite with infinitely many
$\tau_B$-rigid bricks, Proposition~\ref{prop:nonrigid-g} provides a non-rigid integral $g$-vector
$\theta\in K_0(\proj B)$. As $k$ is algebraically closed, Proposition~\ref{prop:asai} then yields an
infinite semibrick $\mathcal{S}=\{B_i\}_{i\in\mathbb{N}}$ of $B$.

Because almost all bricks of $B$ are $\tau_B$-rigid, all but finitely many members of $\mathcal{S}$
are $\tau_B$-rigid; discarding the finitely many exceptions, we assume that every $B_i$ is a
$\tau_B$-rigid brick. By Proposition~\ref{prop:mintau}, all but finitely many indecomposable
$\tau_B$-rigid $B$-modules have projective dimension one; discarding finitely many more members of
$\mathcal{S}$, we may assume that each $B_i$ is moreover of projective dimension at most one.

Applying Proposition~\ref{lem:thin} to the not-strictly-wild algebra $B$ and the infinite semibrick
$\{B_i\}$ yields an infinite sub-semibrick $\{C_i\}_{i\in\mathbb{N}}$ with
$\Ext^1_B(C_i,C_j)=0$ for $i\neq j$.

Fix any $m> {\rm rk}(B)$ and set $M:=C_1\oplus\cdots\oplus C_m$. Because $C_i$ has projective dimension $\leq 1$ with $0=\Ext^1_B(C_i, C_j) \cong D\Hom_B(C_j, \tau C_i)$ for each $j$, then $M$ is $\tau_B$-rigid. But the $C_i$ are pairwise non-isomorphic, so $|M|=m> {\rm rk}(B)$,
contradicting the bound (R3) for $\tau_B$-rigid modules over $B$, whose rank is at most ${\rm rk}(B)$.

\smallskip
Note that in either case we reached a contradiction. This completes the proof.
\end{proof}

\begin{cor}\label{cor:main}
The algebra $A$ is brick-finite if and only if all but finitely many of its bricks are $\tau$-rigid.
\end{cor}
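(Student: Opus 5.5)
The corollary follows almost directly from Theorem~\ref{thm:main}; I would prove the two implications separately.

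\emph{Brick-finite implies almost all bricks are $\tau$-rigid.} If $A$ is brick-finite, then $\brick(A)$ is a finite set. In particular only finitely many bricks fail to be $\tau$-rigid, so the condition holds trivially.

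\emph{Almost all bricks $\tau$-rigid implies brick-finite.} I would argue by contraposition. Suppose $A$ is brick-infinite while only finitely many of its bricks are not $\tau$-rigid. Removing those finitely many bricks from the infinite set $\brick(A)$ leaves infinitely many bricks, all of them $\tau$-rigid. So $A$ has infinitely many $\tau$-rigid bricks. Theorem~\ref{thm:main} then gives infinitely many bricks that are not $\tau$-rigid, which is a contradiction. Hence $A$ must be brick-finite.

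I would also record the equivalent forms stated in Corollary~\ref{cor:main-intro}, even though the statement here asks only for the first equivalence.
\begin{itemize}
\item Condition (1)$\Leftrightarrow$(2) is the contrapositive of the equivalence just proved, since $\Hom_A(X,\tau X)\neq 0$ means exactly that $X$ is not $\tau$-rigid.
\item Condition (2)$\Leftrightarrow$(3) holds brick by brick. I would use the standard characterization (Auslander--Smal\o, recalled in \cite{AIR}) that a module $X$ is $\tau$-rigid if and only if $\Ext^1_A(X,\Fac(X))=0$.
\end{itemize}
No step here is a real obstacle: all the substance is contained in Theorem~\ref{thm:main}. The only point needing care is the citation of the Auslander--Smal\o{} characterization for (3).
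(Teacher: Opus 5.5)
Your proposal is correct and follows the paper's own proof: the forward direction holds because a brick-finite algebra has only finitely many bricks, and the converse is a contradiction with Theorem~\ref{thm:main}, since removing finitely many bricks from an infinite $\brick(A)$ leaves infinitely many $\tau$-rigid ones. Your side remarks on (1)--(3) also match the paper's Remark~\ref{Rem: on rigid-brick conj}, which uses the Auslander--Smal\o{} characterization $\Hom_A(X,\tau X)=0 \iff \Ext^1_A(X,\Fac(X))=0$.
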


\begin{proof}
If $A$ is brick-finite, then $\brick(A)$ is finite, so the condition holds vacuously. Conversely,
suppose all but finitely many bricks of $A$ are $\tau$-rigid but, for contradiction, that $A$ is
brick-infinite. Then $A$ admits infinitely many $\tau$-rigid bricks, so by Theorem~\ref{thm:main} it
also admits infinitely many bricks that are not $\tau$-rigid, contradicting the hypothesis. Hence $A$
is brick-finite.
\end{proof}

\begin{remark}
Corollary~\ref{cor:main} answers Question~\ref{q:main} with no tameness assumption, extending the $E$-tame case of \cite[Proposition~8.7]{MP4}.
We emphasize that our proof uses the $\tau$-rigidity hypothesis, which is known to be stronger than rigidity. In fact, it is expected that a similar implication holds under the weaker rigidity assumption; see Remark \ref{Rem: on rigid-brick conj}. We also note that in the language of the brick--$\tau$-rigid correspondence \cite{DIJ}, Corollary~\ref{cor:main} says that $A$ is brick-finite as soon as this correspondence acts like identity for a co-finite subset of its codomain. 
These new results strengthen some previous results from \cite[Theorem 1.1]{MP1} on the behavior of $\tau$-rigid bricks. 
That being the case, it leads to further progress towards some challenging conjectures on the study of bricks, including those summarized in \cite{MP4}.
\end{remark}

\medskip

We end with supplementary remarks to compare our main question with the similar question that uses the weaker rigidity assumption. We also relate these questions with some fundamental and closely related open problems. 
The next remark also completes the proof of Corollary \ref{cor:main-intro}, more specifically, the equivalence of $(1)-(3)$.

\begin{remark}\label{Rem: on rigid-brick conj}
An important open problem that we call the ``rigid-brick conjecture" states: \emph{If $\Ext^1_A(X,X)=0$ for all but possibly finitely many $X\in \brick(A)$, is $A$ necessarily brick-finite?} An affirmative answer to this question is expected; yet unknown in full generality. That is to say, for every brick-infinite algebra $A$, it is conjectured that we always have $\Ext^1_A(X,X)\neq 0$, for infinitely many $X\in \brick(A)$. 
This itself is an immediate consequence of \cite[Conjecture 1.3(2)]{Mo}, nowadays often known as the ``Second brick-Brauer-Thrall (2nd bBT) conjecture"; a similar statement also appeared in \cite[Conjecture 2]{STV}. For more details, recent developments, and related problems, see \cite{MP2,MP4} and the references therein.

\smallskip

Observe that, from \cite[Proposition 5.8]{AS}, it follows that $\Hom_A(X,\tau X)=0$ if and only if $\Ext^1_A(X,\Fac(X))=0$. 
Thus, Question \ref{q:main} can be reformulated as follows: \emph{If $\Ext^1_A(X,\Fac(X))=0$ for all but possibly finitely many $X\in \brick(A)$, is $A$ necessarily brick-finite?} This articulation better highlights the conceptual link to the above-mentioned open ``rigid-brick conjecture", because it does not use the Auslander-Reiten translate $\tau$ and entirely relies on the more elementary condition $\Ext^1_A(X,M)=0$, for all $M \in \Fac(X)$. 
Therefore, our Theorem \ref{thm:main-intro} settles this (equivalent) question, and Corollary \ref{cor:main-intro} asserts that for every brick-infinite algebra $A$, we always have $\Ext^1_A(X,\Fac(X))\neq 0$, for infinitely many $X\in \brick(A)$. 
\end{remark}

\section{Size of semibricks consisting of Ext-orthogonal rigid bricks}

In the proof of Theorem \ref{thm:main}, we ended up with a semibrick of size larger than the rank of the algebra, consisting of pairwise Ext-orthogonal $\tau$-rigid bricks. This led to a contradiction because under our assumptions, those bricks could be assumed to have projective dimension at most one, which implied the existence of a $\tau$-rigid module with too many non-isomorphic indecomposable summands. 

\medskip

This leads to the following natural question about rigidity of semibricks: is it possible to have a semibrick consisting of Ext-orthogonal rigid bricks whose size exceeds the rank of the algebra? We note that any such semibrick has an associated wide subcategory that is equivalent to the module category of a (finite dimensional) semisimple algebra. 

\medskip

It is well known that a wide subcategory $\mathcal{W}$ of $\modu A$ is functorially finite if and only if $\mathcal{W}$ is equivalent to $\modu B$, for a finite dimensional algebra $B$; for instance, see \cite[Proposition 4.12]{En}. Therefore, one can naturally ask whether, in this case, ${\rm rk}(B) \le {\rm rk}(A)$. In fact, this inequality holds if $\mathcal{W}$ is left-finite, namely, if the associated torsion class of $\mathcal{W}$ is functorially finite; see \cite[Corollary 2.10]{As1}. Left finite wide subcategories are known to form a subset of the functorially finite wide subcategories.
The example below shows that functorially finiteness alone is not enough to guarantee that ${\rm rk}(B) \le {\rm rk}(A)$.
In fact, we present a family of algebras of rank $3$ each of which admitting semibricks consisting of pairwise Ext-orthogonal rigid bricks of size strictly larger than $3$.

\begin{example} \label{example: sizes of rigid semibricks}
    Fix an integer $r \ge 4$ and let $Q_r$ be the quiver with three vertices $1,2,3$ and arrows
\[
  \alpha_1,\dots,\alpha_r \colon 1 \longrightarrow 2,
  \qquad
  \beta_1,\dots,\beta_r \colon 2 \longrightarrow 3.
\]
Define
\[
  A_r := kQ_r \big/ \big\langle\, \beta_j \alpha_i \;:\; i \neq j \,\big\rangle.
\]
 For $1 \le \ell \le r$, let $X_\ell$ be the representation of $A_r$
having vector space $k$ at each vertex and
where only the arrows $\alpha_\ell, \beta_\ell$ act non-trivially. One can easily check that the set $\{X_\ell \mid 1 \le \ell \le r\}$ is a semibrick consisting of $\Ext$-orthogonal rigid bricks. We note that the wide subcategory $\mathcal{W}(\{X_\ell \mid 1 \le \ell \le r\})$ is equivalent to $\modu k^r$, hence is functorially finite whose rank exceeds the rank of $A_r$. 
\end{example}

The above argument and example lead to the following question.

\begin{question}
    Let $A$ be a finite dimensional associative $k$-algebra. Is it true that any semibrick consisting of pairwise Ext-orthogonal rigid bricks is necessarily finite? If so, can we find an upper bound on the sizes of such finite semibricks?
\end{question}

\textbf{Acknowledgments.}
KM was supported by Early-Career Scientist JSPS Kakenhi grant number 24K16908. CP was supported by the Natural Sciences and Engineering Research Council of Canada (RGPIN-2026-05988) and by the Canadian Defence Academy Research Programme. We thank Sota Asai and Osamu Iyama for helpful discussions related to the problem treated in this manuscript. 
Part of this work was conducted while KM was on a research visit in Québec, facilitated by the Tomlinson Fellowship Visiting Scholar Funding from Bishop’s University. He is particularly grateful to Prof. Thomas Br\"ustle for his hospitality.

\bigskip

\textbf{Use of Artificial Intelligence.} The familly of examples $A_r$ in Example \ref{example: sizes of rigid semibricks} was found during an exploratory search with Claude (by Anthropic).

\end{document}